\documentclass[a4paper,reqno,final]{amsart}

\usepackage{cite}
\usepackage{enumerate}
\usepackage{nicefrac}
\usepackage{color}
\usepackage{xspace}
\usepackage{comment}
\usepackage{mathtools}

\usepackage{amsmath,amsthm,amssymb}
\usepackage[foot]{amsaddr}
\newtheorem{theorem}{Theorem}[section]
\newtheorem{lemma}[theorem]{Lemma}
\newtheorem{proposition}[theorem]{Proposition}

\theoremstyle{definition}

\newtheorem{example}[theorem]{Example}

\theoremstyle{remark}
\newtheorem{remark}[theorem]{Remark}

\usepackage{libertinus}

\newcommand{\set}[1]{\mathcal{#1}}
\newcommand{\spc}[1]{\mathbb{#1}}

\renewcommand{\ge}{\geqslant}

\renewcommand{\le}{\leqslant}

\newcommand{\M}{\set{M}}
\newcommand{\K}{\set{A}(x_0)}
\newcommand{\F}{\set{F}}
\newcommand{\G}{\Lambda}

\newcommand{\X}{\spc{X}}

\newcommand{\x}{x_\infty}

\newcommand{\A}{\mathrm{D}}

\newcommand{\norm}[1]{\|#1\|}

\newcommand{\wc}{\rightharpoonup}

\renewcommand{\F}[1]{\mathrm{Fix}(#1)}

\newcommand{\seq}{\Sigma}
\newcommand{\seqt}{\Sigma_\eps}
\newcommand{\pS}{\psi_{\Sigma}}

\newcommand{\li}{\lambda}

\newcommand{\NE}{{\rm($\sharp$)}\xspace}
\newcommand{\WC}{{\rm($\flat$)}\xspace}

\newcommand{\MM}{{\rm(M)}\xspace}
\renewcommand{\SS}{{\rm($\star$)}\xspace}

\newcommand{\eps}{\tau}

\begin{document}

\title[Weakly convergent fixed point iterations]{Weakly convergent fixed point iterations\\ for weakly sequentially non-expansive mappings}

\author{Thomas P.~Wihler}
\address{Mathematics Institute, University of Bern, CH-3012 Bern}
\email{thomas.wihler@unibe.ch}

\subjclass[2010]{47H10,47H09,47J25,65J15}

\keywords{Fixed point theorems, fixed point iterations, weak convergence, Opial spaces, weak sequential non-expansiveness}

\thanks{%
      Research supported by the Swiss National Science Foundation,
      Grant No. 200021\_212868}

\date{\today}

\begin{abstract}
Fixed point iterations are a fundamental tool in numerical analysis and scientific computing for the approximation of solutions to nonlinear problems. Their convergence is often established via the Banach fixed point theorem, provided that a suitable contraction property can be verified. However, such conditions are typically too restrictive for more complex nonlinear equations that lack key structural features such as monotonicity or convexity.
In this paper, we develop a general framework for the weak convergence of fixed point iterations based on asymptotic bounds. In particular, we introduce and exploit a \emph{weak sequential non-expansiveness} property, which is significantly weaker than the global Lipschitz assumptions commonly employed in this context. This approach permits us to extend classical convergence results to a broader class of mappings in general (reflexive) Opial spaces, without relying on additional geometric assumptions such as uniform convexity.
\end{abstract}

\maketitle


\section{Fixed point iterations in Opial spaces}

In practical applications, solutions of nonlinear problems (such as algebraic or differential equations) are often approximated with the aid of suitable fixed point iteration schemes, viz.
\begin{equation}\label{eq:fp}
x_{n+1}=f(x_n),\qquad n\ge 0,
\end{equation}
where $f:\,\M\to\X$ is a mapping on a (non-empty)
subset $\M$ of a (real) Banach space~$\X$, equipped with a norm $\norm{\cdot}$, and $x_0\in\M$ is a starting value so that $x_n\in\M$ for all $n\ge 0$ (i.e., $f$ can be iterated for $x_0$). The focus of this contribution is to study  the weak convergence of the above iteration to a fixed point of $f$, i.e., to the set
$
\F{f}=\{x\in\M:\,f(x)=x\}
$
of all fixed points of $f$ in $\M$, under \emph{asymptotic conditions}. To simplify notation, for a given initial value $x_0\in\M$, we denote by
\begin{equation}\label{eq:seq}
\seq(x_0)=\{x_n\}_{n\ge 0}=\left\{f^n(x_0)\right\}_{n\ge 0}
\end{equation}
the resulting sequence generated from~\eqref{eq:fp}, which, as indicated before, is assumed to belong entirely to the set~$\M$.\medskip

From a theoretical perspective, a convergence analysis for~\eqref{eq:fp} that yields practically useful results involves three key aspects. These will be outlined in the sequel in sufficient detail to guide interested non-experts into the subject and to briefly indicate a few historical landmarks.

\subsection*{I. Existence of weakly convergent subsequences} 
The sequence $\seq(x_0)\subset\M$ from \eqref{eq:seq} needs to contain a \emph{weakly convergent subsequence}, with a weak limit $x_\infty\in\M$, i.e., $x_{n_k}\wc x_\infty$ as $k\to\infty$. Evidently, this is guaranteed if the set $\M$ is weakly sequentially compact. In the present paper, by referring to the Eberlein-\v{S}mulian theorem, this property is established by assuming that
$\X$ is a \emph{reflexive Banach space}.
Indeed, under this condition, any bounded sequence has a weakly convergent subsequence; moreover, if $\M$ is weakly sequentially closed then $x_\infty\in\M$. For practical reasons, we consider the following property:
\begin{enumerate}[\MM]
\item The set $\M$ is non-empty, \emph{bounded}, \emph{(norm) closed}, and \emph{convex}.
\end{enumerate}
This is a useful assumption since, by Mazur’s lemma (which implies that any closed and convex set is weakly sequentially closed), we observe the following fact:
\begin{enumerate}[\SS]
\item Any set $\M$ of type~\MM in a reflexive Banach space is weakly sequentially compact.
\end{enumerate}

\subsection*{II. Full convergence} 
A mechanism that yields the weak convergence of the \emph{full} sequence~$\seq(x_0)$ to a limit $\x$, i.e. $x_n\wc \x$ as $n\to\infty$, is required. In order to address this point, in the present article, we assume that $\X$ is a so-called \emph{Opial space}; by definition, in such spaces, any weakly convergent sequence $x_n\wc \x$ satisfies the \emph{Opial condition}:
\begin{equation}\label{eq:Opial}
\liminf_{n\to\infty}\norm{x_n-\x}<\liminf_{n\to\infty}\norm{x_n-z}\qquad\forall z\neq x_\infty.
\end{equation}
This property, which is an important tool in proving weak convergence of (entire) sequences, was discovered by Opial in his seminal paper~\cite{Opial:1967}; we will expand on this approach in \S\ref{sc:fp} of the current work. In particular, Opial proved that~\eqref{eq:Opial} holds true in any Hilbert space, cf.~\cite[Lem.~1]{Opial:1967}, as well as in uniformly convex Banach spaces with a weakly continuous duality mapping; this justifies the relevance of~\eqref{eq:Opial} in many practical applications. We also mentioned that, for any separable Banach space, there is an equivalent norm such that~\eqref{eq:Opial} holds true, see~\cite[Thm.~1]{Dulst:82}. For a characterization of Opial spaces in the context of certain classes of Banach spaces,
we further refer to, e.g., in~\cite[Thm.~2]{Sims:1985} or~\cite[\S3]{SoTi:2016}.

\subsection*{III. Convergence of fixed point iterations} 
Finally, the question of whether the (weak) limit of the sequence $\seq(x_0)$ is a fixed point of $f$ needs to be answered affirmatively. This usually requires an appropriate continuity assumption on the mapping $f$, which is typically expressed in terms of global Lipschitz-type bounds in the literature. For instance, in the framework of the celebrated Banach fixed point theorem, a uniform contraction bound
\[
\norm{f(x)-f(y)}\le\kappa\norm{x-y}\qquad\forall x,y\in\M,
\]
with a constant $\kappa\in(0,1)$, yields the strong convergence (i.e. in norm) of the  iteration~\eqref{eq:fp} to the unique fixed point of~$f$ in $\M$. Various generalizations are available in the literature, including, e.g., the Boyd-Wong~\cite{BoydWong:1969} or Meir-Keeler~\cite{MeirKeeler:1969} contraction criteria. Moreover, under suitable compactness assumptions (on either the set $\M$ or the mapping~$f$), Edelstein's theorem~\cite[Thm.~1]{Edelstein:1962} states that the contraction framework can be extended to so-called \emph{weak contractions}, viz.
\[
\norm{f(x)-f(y)}<\norm{x-y}\qquad\forall x,y\in\M,\quad x\neq y,
\]
where strong convergence to the (unique) fixed point of $f$ can be retained. We refer to the monograph~\cite{Pata:2010} for further details on the subject.

A breaking point with regard to the existence and uniqueness of fixed points, and to the convergence of~\eqref{eq:fp}, occurs when $f$ is merely uniformly 1-Lipschitz: 
\begin{equation}\label{eq:ne'}
\norm{f(x)-f(y)}\le\norm{x-y}\qquad\forall x,y\in\M;
\end{equation}
such mappings are called \emph{non-expansive}. In this situation, the fixed point set of $f$ may no longer be a singleton (as can be seen, e.g., if $f$ is the identity map), and strong convergence of the fixed point iteration is generally unattainable. Yet, under the condition~\eqref{eq:ne'}, the famous Browder-G\"ohde-Kirk theorem (see, e.g., \cite[Thm.~10.A]{Zeidler:I}) guarantees the existence of fixed points if the underlying Banach space is uniformly convex, and the set $\M$ is of type~\MM. Furthermore, within this framework, under the Opial condition~\eqref{eq:Opial}, for any $\eps\in(0,1)$, the \emph{modified successive iteration} given by
\begin{equation}\label{eq:mod}
x_{n+1}=\eps x_n+(1-\eps)f(x_n),\qquad n\ge 0,
\end{equation}
converges \emph{weakly} to a fixed point of~$f$; see~\cite[Thm.~3]{Opial:1967}. Under the additional assumption that the sequence generated by~\eqref{eq:mod} is \emph{asymptotically regular}, i.e.
\begin{equation}\label{eq:ar}
\norm{x_{n+1}-x_n}\to 0\qquad\text{as }n\to\infty,
\end{equation}
the weak convergence of~\eqref{eq:mod} is retained for $\eps=0$; this is known as \emph{Opial's theorem}, see~\cite[Thm.~2]{Opial:1967}. Conditions that imply the asymptotic regularity~\eqref{eq:ar} in the context of fixed point iterations have been studied extensively, see, e.g., the works~\cite{BaillonBruckReich:1978,BruckReich:1977,Ishikawa:1976,ReichShafrir:1987}.

In the present paper, rather than employing \emph{global and uniform} Lipschitz bounds as in the aforementioned works, we make use of significantly weaker, asymptotic conditions. We begin with the following definition:
\begin{enumerate}[\NE]
\item The mapping $f$ is called \emph{weakly sequentially non-expansive} at a point $y\in\M$ if, for any sequence $\{y_n\}_n\subset\M$ that converges weakly to $y$, i.e. $y_n\wc y$ as $n\to\infty$, it holds that
\begin{equation}\label{eq:ne}
\liminf_{n\to\infty}\norm{f(y_n)-f(y)}\le\liminf_{n\to\infty}\norm{y_n-y}.
\end{equation}
We say that $f$ satisfies \NE on $\M$ if~\eqref{eq:ne} holds at any point $y\in\M$.
\end{enumerate}
Clearly, \NE holds true for any global 1-Lipschitz mapping, but is much weaker than~\eqref{eq:ne'}. It is also weaker than weak-to-strong continuity; indeed, \NE only controls the asymptotic behavior of $f$ along weakly convergent sequences in a one-sided (liminf) sense. In particular, the bound \NE does \emph{not} even force $f$ to be continuous. To illustrate this, on a Banach space $\X$, define the mapping
\[
f(x)=\begin{cases}
0&\text{for }x=0,\\
\norm{2x}^{-1}x&\text{otherwise},
\end{cases}
\]
which is discontinuous at $0$,  and consider a sequence $\{y_n\}_{n\ge 0}\subset\X\setminus\{0\}$ with $\norm{y_n}=1$ for all $n\in\mathbb{N}$, and with $y_n\wc y=0$; then,
\[
\liminf_{n\to\infty}\norm{f(y_n)-f(0)}=\nicefrac12\qquad\text{but}\qquad\liminf_{n\to\infty}\norm{y_n-0}=1,
\]
which shows that~\eqref{eq:ne} holds despite the discontinuity of~$f$.

\begin{example}
To give an example for which \NE is fulfilled, for any $y\in\M$, suppose that the mapping $f:\,\M\to\X$ can be approximated by a (not necessarily linear) operator $\A_y:\,\X\to\X$ in the sense that
\[
\norm{f(y+h)-f(y)-\A_y(h)}\le (1-\alpha_y)\norm{h}+\omega_y(h),
\]
for any $h\in\M$ so that $y+h\in\M$; here, we impose the bound 
\[
\norm{\A_y(h)}\le \alpha_y\norm{h}+\rho_y(h),\qquad h\in\X,
\]
for some ($y$-dependent) constant $\alpha_y\le 1$; for the two remainder terms $\omega_y$ and $\rho_y$, we assume that $\omega_y(h)\to0$ and $\rho_y(h)\to0$ as $h\wc 0$. Then, for a weakly convergent sequence $y_n\wc y$ in $\M$, we have
\begin{align*}
\norm{f(y_n)-f(y)}
&\le (1-\alpha_y)\norm{y_n-y}+\norm{\A_y(y_n-y)}+\omega_y(y_n-y)\\
&\le \norm{y_n-y}+\rho_y(y_n-y)+\omega_y(y_n-y),
\end{align*}
for any $n\ge 0$. For $n\to\infty$ the last two terms on the right-hand side tend to 0, whence we derive~\eqref{eq:ne}.
\end{example}

\subsection*{Goal of the paper}
In addition to the above setting {\bfseries I.}--{\bfseries III.}, we note that the existence of fixed points and the weak convergence of the fixed point iteration~\eqref{eq:fp} may require further structural conditions on~$f$.
The aim of this paper is to replace global Lipschitz assumptions by asymptotic conditions, thereby extending classical convergence results to a substantially broader setting.
%
In particular, we obtain new results on the existence of fixed points (Theorem~\ref{thm:ex}), as well as on the weak convergence of the iterative schemes~\eqref{eq:fp} and~\eqref{eq:mod}; see Theorems~\ref{thm:main1} and~\ref{thm:main2}, respectively. We emphasize that our analysis applies to general reflexive Opial spaces and does \emph{not} rely on additional assumptions such as uniform convexity.


\section{Existence of fixed points in Opial spaces}\label{sc:ex}

To establish the existence of fixed points, in addition to \NE, we impose a condition on the mapping~$f$ that can be regarded, in some sense, as a weak, asymptotic converse to non-expansiveness. It involves the quantity
\begin{equation}\label{eq:dM}
d_{\M}:=\sup_{v,w\in\M}\norm{v-w},
\end{equation}
which signifies the diameter of the set~$\M$ (note that $d_\M<\infty$ under condition~\MM). 
Throughout, we assume that $d_\M>0$.
\begin{enumerate}[\WC]
\item There is a constant $\delta\in(0,1)$ (depending on $f$) such that, whenever a sequence $\{y_n\}_{n}\subset\M$ satisfies 
\begin{equation}\label{eq:wc}
0
<\limsup_{n\to\infty}\norm{y_{n+1}-y_n}
<\limsup_{n\to\infty}\norm{f(y_{n+1})-f(y_{n})},
\end{equation}
then it holds
\begin{equation}\label{eq:auxb}
\limsup_{n\to\infty}\norm{y_{n+1}-f(y_n)}>\delta d_\M.
\end{equation}
\end{enumerate}
The meaning of condition~\WC is that it excludes sequences along which $f$ expands distances asymptotically without forcing a corresponding deviation from the fixed point structure.

\begin{example}\label{ex:neq}
The second bound in~\eqref{eq:wc} is never fulfilled if $f$ is non-expansive, cf.~\eqref{eq:ne'}, whence we conclude that \WC is always satisfied for such mappings.
More generally, we note that if there exists $\epsilon>0$ such that $f$ is non-expansive on the set 
\[
\set{B}_\epsilon=\{(x,y)\in\M:\,\norm{y-f(x)}\le\epsilon\}\subset\M,
\] 
i.e.
\begin{equation}\label{eq:neloc}
\norm{f(x)-f(y)}\le\norm{x-y}
\qquad \forall\,x,y\in\set{B}_\epsilon,
\end{equation}
then we see that condition~\WC is fulfilled for any $\delta\in(0,1)$ with $\delta<\nicefrac{\epsilon}{d_{\M}}$. Indeed, suppose that~\eqref{eq:wc} holds for some sequence $\{y_n\}_n\in\M$, then we can find a subsequence $\{y_{n_k}\}_k$ and $\theta>0$ such that, in particular,
\[
\theta+\norm{y_{n_k+1}-y_{n_k}}
<\norm{f(y_{n_k+1})-f(y_{n_k})},
\]
for any $k\ge k_0$, with $k_0\in\mathbb{N}$ sufficiently large. From~\eqref{eq:neloc}, we infer that 
\[
\norm{y_{n_k+1}-f(y_{n_k})}>\epsilon\qquad\forall k\ge k_0,
\] 
and thus $\limsup_{n\to\infty}\norm{y_{n+1}-f(y_{n})}\ge\epsilon>\delta d_\M$, which gives~\eqref{eq:auxb}.

\end{example}

\begin{theorem}\label{thm:ex}
Consider a subset $\M$ of type \MM of a reflexive Opial space~$\X$. Then, any self-map $f:\,\M\to\M$ that satisfies the properties~\NE and \WC has a fixed point. 
\end{theorem}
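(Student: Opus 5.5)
The plan is to produce an \emph{approximate fixed point sequence} $\{z_k\}_k\subset\M$, i.e.\ $\norm{z_k-f(z_k)}\to0$, and then to pass to a weak limit using~\SS. The Opial condition~\eqref{eq:Opial} together with~\NE will identify this limit as a fixed point. Condition~\WC is used only to build the approximate fixed points, via a family of anchored (Halpern-type) iterations with a fixed small anchoring weight.

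\textbf{Step 1 (anchored iteration).} Fix an anchor $u\in\M$ and a parameter $s\in(0,\delta]$, and define
\[
y_0\in\M,\qquad y_{n+1}=(1-s)f(y_n)+s\,u,\qquad n\ge0 .
\]
Since $f(\M)\subset\M$ and $\M$ is convex, every $y_n$ lies in $\M$. Moreover,
\[
\norm{y_{n+1}-f(y_n)}=s\norm{u-f(y_n)}\le s\,d_\M\le\delta d_\M,
\]
so~\eqref{eq:auxb} fails for $\{y_n\}_n$. By~\WC, \eqref{eq:wc} must therefore fail. Set $L:=\limsup_n\norm{y_{n+1}-y_n}$ and $L':=\limsup_n\norm{f(y_{n+1})-f(y_n)}$. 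Then either $L=0$, or $L>0$ and $L'\le L$.

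\textbf{Step 2 (asymptotic regularity).} From the recursion, $y_{n+1}-y_n=(1-s)\bigl(f(y_n)-f(y_{n-1})\bigr)$, which gives $L=(1-s)L'$. In the second case of Step~1 this yields $L\le(1-s)L$, which forces $L=0$ and contradicts $L>0$. Hence $L=0$ in all cases, i.e.\ $\norm{y_{n+1}-y_n}\to0$. Consequently
\[
\limsup_{n\to\infty}\norm{y_n-f(y_n)}\le\limsup_{n\to\infty}\bigl(\norm{y_n-y_{n+1}}+\norm{y_{n+1}-f(y_n)}\bigr)\le s\,d_\M .
\]
Now take $s_k\downarrow0$ with $s_k\le\delta$. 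For each $k$, choose $n$ large enough in the corresponding iteration and set $z_k:=y_n$, so that $\norm{z_k-f(z_k)}\le 2s_kd_\M\to0$.

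\textbf{Step 3 (limit is a fixed point).} By~\SS, a subsequence, still denoted $\{z_k\}_k$, converges weakly to some $z\in\M$. Since $f(z)\in\M$, we can pass to a further subsequence along which $\lim_k\norm{f(z_k)-f(z)}$ exists and equals the $\liminf$. This subsequence still converges weakly to $z$, so~\NE applies to it and gives
\[
\liminf_k\norm{z_k-f(z)}\le\lim_k\bigl(\norm{z_k-f(z_k)}+\norm{f(z_k)-f(z)}\bigr)\le\liminf_k\norm{z_k-z}.
\]
If $f(z)\ne z$, this contradicts the strict inequality in the Opial condition~\eqref{eq:Opial} applied to this subsequence. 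Hence $f(z)=z$.

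The main obstacle is Steps~1--2: extracting asymptotic regularity from~\WC without any Lipschitz bound on $f$. The key observation is that anchoring with weight $s\le\delta$ automatically violates~\eqref{eq:auxb}, while the recursion ties $L$ and $L'$ together by the factor $(1-s)$. Step~3 additionally requires care with the $\liminf$'s, namely choosing the subsequence before applying~\NE; this is routine.
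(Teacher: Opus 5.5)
Your proof is correct and follows essentially the paper's route: anchoring with weight at most $\delta$ makes \eqref{eq:auxb} fail, so \WC combined with $\norm{y_{n+2}-y_{n+1}}=(1-s)\norm{f(y_{n+1})-f(y_n)}$ forces asymptotic regularity, and a weak limit of approximate fixed points is then shown to be a fixed point via \NE and the Opial condition. The one difference is that the paper adds an intermediate Opial/\NE step showing that a weak cluster point $\xi_n$ of each anchored iteration is an exact fixed point of $\phi_n=(1-\epsilon_n)f+\epsilon_n z$, whereas you get approximate fixed points directly from asymptotic regularity and the triangle inequality; this is a valid shortcut that needs \NE only at the final limit point.
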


\begin{proof}
Consider a sequence $\{\epsilon_n\}_n\subset(0,\delta)$ with $\epsilon_n\to 0^+$ as $n\to\infty$, where $\delta\in(0,1)$ is the constant from~\WC, and a point $z\in\M$. Then, for given $n\in\mathbb{N}$, exploiting the convexity of $\M$, we define the iteration 
\[
x_{k+1}=\phi_n(x_k):=(1-\epsilon_n)f(x_k)+\epsilon_n z,\qquad k\ge 0,
\]
with a starting point $x_0\in\M$. For each $k\ge 0$, note that
\[
\norm{x_{k+1}-f(x_k)}=\epsilon_n\norm{z-f(x_k)}\le
\epsilon_n d_\M\le
\delta d_\M,
\]
and
\[
\norm{x_{k+2}-x_{k+1}}=(1-\epsilon_n)\norm{f(x_{k+1})-f(x_k)}.
\]
Thus, we have $\limsup_{k\to\infty}\norm{x_{k+1}-f(x_k)}\le\delta d_\M$, which is the converse of the bound \eqref{eq:auxb}, as well as
\[
\li\le(1-\epsilon_n)\limsup_{k\to\infty}\norm{f(x_{k+1})-f(x_k)},
\]
where we let $\li:=\limsup_{k\to\infty}\norm{x_{k+1}-x_{k}}$.
Then, we either have 
\[
\limsup_{k\to\infty}\norm{f(x_{k+1})-f(x_k)}=0,
\] 
and therefore $\lambda=0$, or otherwise 
\[
\li < \limsup_{k\to\infty}\norm{f(x_{k+1})-f(x_k)},
\]
which is the second bound in~\eqref{eq:wc}. Thus, if the second inequality in~\eqref{eq:wc} holds, then the first one must fail; hence in all cases we obtain $\lambda=0$. Therefore, we deduce that
\[
\li=\limsup_{k\to\infty}\norm{x_{k+1}-x_k}=\lim_{k\to\infty}\norm{x_{k+1}-x_k}=0,
\] 
i.e., the sequence $\{x_k\}_k$ is asymptotically regular. Moreover,
involving the weak sequential compactness of $\M$, cf.~\SS, we infer that there is a weakly convergent subsequence $x_{k_\ell}\wc\xi_n\in\M$.
Thus, using asymptotical regularity, we obtain
\begin{align*}
\liminf_{\ell\to\infty}\norm{\phi_n(\xi_n)-x_{k_\ell}}
&=\liminf_{\ell\to\infty}\norm{\phi_n(\xi_n)-x_{k_\ell+1}}\\
&=\liminf_{\ell\to\infty}\norm{\phi_n(\xi_n)-\phi_n(x_{k_\ell})}\\
&=(1-\epsilon_n)\liminf_{\ell\to\infty}\norm{f(\xi_n)-f(x_{k_\ell})}.
\end{align*}
Applying~\eqref{eq:ne}, it follows that
\[
\liminf_{\ell\to\infty}\norm{\phi_n(\xi_n)-x_{k_\ell}}
\le(1-\epsilon_n)\liminf_{\ell\to\infty}\norm{\xi_n-x_{k_\ell}}
\le\liminf_{\ell\to\infty}\norm{\xi_n-x_{k_\ell}},
\]
from which we conclude that $\phi_n(\xi_n)=\xi_n$ by the Opial condition~\eqref{eq:Opial}, i.e. $\xi_n$ is a fixed point of $\phi_n$. Consequently, we have that 
\begin{equation}\label{eq:aux2026}
\norm{f(\xi_n)-\xi_n}=\norm{f(\xi_n)-\phi_n(\xi_n)}=\epsilon_n\norm{f(\xi_n)-z}
\le\epsilon_nd_\M\xrightarrow{n\to\infty} 0,
\end{equation}
by the boundedness of $\M$ and the fact that $\epsilon_n\to0^+$ as $n\to\infty$. Again, in light of property~(M), there is a weakly convergent subsequence $\xi_{n_m}\wc \xi^\star\in\M$ as $m\to\infty$. Then, with the aid of~\eqref{eq:aux2026} and of~\eqref{eq:ne}, we arrive at
\begin{align*}
\liminf_{m\to\infty}\norm{f(\xi^\star)-\xi_{n_m}}
=\liminf_{m\to\infty}\norm{f(\xi^\star)-f(\xi_{n_m})}
\le\liminf_{m\to\infty}\norm{\xi^\star-\xi_{n_m}},
\end{align*}
which, by Opial's condition~\eqref{eq:Opial}, yields $f(\xi^\star)=\xi^\star$.
\end{proof}


\section{Weak convergence of fixed point iterations}

\subsection{Weak convergence of full sequences}\label{sc:fp}

For a given sequence $\seq=\{x_n\}_{n}\subset\X$ in a reflexive Opial space $\X$, the set defined by
\begin{equation}\label{eq:L}
\G(\seq):=\left\{z\in\X:\,\lim_{n\to\infty}\norm{x_n-z}\text{ exists}\right\}
\end{equation}
will play a central role for the analysis to be developed in the sequel. 

\begin{remark}\label{rem:subseq}
We collect a few observations about the above set $\G(\seq)$.
\begin{enumerate}[(a)]
\item Crucially for this work, if $\G(\seq)\neq\emptyset$, then we conclude that $\seq$ is necessarily bounded; in particular, since the Banach space $\X$ is assumed reflexive, the sequence $\seq$ has a weakly convergent subsequence.

\item We note, however, that $\G(\seq)$ may be empty even when $\seq$ converges weakly. Indeed, consider the sequence 
\[
x_n=\begin{cases}
e_n&\text{if $n$ is even},\\
2e_n&\text{if $n$ is odd},
\end{cases}
\]
where $\{e_n\}_{n\ge1}$ is the canonical basis in the Hilbert space $\ell_2(\mathbb{R})$. It holds that $x_n\wc 0$ as $n\to\infty$. Furthermore, for any $z\in\ell_2(\mathbb{R})$, we have that
\[
\norm{x_n-z}^2
=\norm{x_n}^2-2(x_n,z)+\norm{z}^2
=\norm{x_n}^2+(z-2x_n,z),
\]
where $(\cdot,\cdot)$ denotes the inner product in~$\ell_2(\mathbb{R})$;
on the right-hand side, the norm $\norm{x_n}^2$ oscillates between $1$ and $2$, and does hence not converge, while the remaining term tends to $\norm{z}^2$ as $n\to\infty$. Consequently, $z\not\in\G(\seq)$ for any $z\in\ell_2(\mathbb{R})$.
\end{enumerate}
\end{remark}

If $\G(\seq)\neq\emptyset$, we introduce the function
\begin{equation*}
\pS(z):\, \G(\seq)\to[0,\infty),\qquad z\mapsto\pS(z):=\lim_{n\to\infty}\norm{x_n-z}.
\end{equation*}
We observe the following instrumental result, which illustrates the essential mechanism of the Opial condition~\eqref{eq:Opial} applied in this paper.

\begin{lemma}\label{lem:my}
If a sequence $\seq$ in a (not necessarily reflexive) Opial space $\X$ possesses a weak accumulation point $w\in\G(\seq)$, then
\begin{equation}\label{eq:pmin}
\pS(w)<\pS(z)\qquad\forall z\in\G(\seq)\setminus\{w\},
\end{equation}
i.e., $w$ is the (unique) global minimizer of~$\pS$ on $\G(\seq)$.
\end{lemma}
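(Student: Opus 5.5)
The plan is to pass to a subsequence that converges weakly to $w$, apply the Opial condition~\eqref{eq:Opial} to that subsequence, and use the definition of $\G(\seq)$ to replace the $\liminf$'s by the full limits $\pS(\cdot)$.

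First, since $w$ is a weak accumulation point of $\seq=\{x_n\}_n$, there is a subsequence $\{x_{n_k}\}_k$ with $x_{n_k}\wc w$ as $k\to\infty$. Fix any $z\in\G(\seq)\setminus\{w\}$. The Opial condition~\eqref{eq:Opial}, applied to the weakly convergent sequence $\{x_{n_k}\}_k$ with limit $w$, gives
\[
\liminf_{k\to\infty}\norm{x_{n_k}-w}<\liminf_{k\to\infty}\norm{x_{n_k}-z}.
\]

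Second, both $w$ and $z$ belong to $\G(\seq)$. Hence the full limits $\lim_{n\to\infty}\norm{x_n-w}=\pS(w)$ and $\lim_{n\to\infty}\norm{x_n-z}=\pS(z)$ exist. Any subsequence of a convergent real sequence has the same limit, so both $\liminf$'s above are genuine limits along $n_k$. They equal $\pS(w)$ and $\pS(z)$, respectively. Substituting yields $\pS(w)<\pS(z)$, which is~\eqref{eq:pmin}. In particular $w$ is a global minimizer of $\pS$ on $\G(\seq)$. It is the unique one, because the inequality is strict for every other $z\in\G(\seq)$.

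There is no substantial obstacle. The only point needing care is that~\eqref{eq:Opial} is stated for weakly convergent sequences, whereas $\seq$ itself need not converge weakly. This is why we pass to the subsequence first. The membership $w,z\in\G(\seq)$ then lets us transfer the subsequential comparison back to the full-sequence quantities $\pS$. Reflexivity is not used anywhere, which is consistent with the parenthetical remark in the statement.
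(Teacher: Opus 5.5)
Your proposal is correct and follows essentially the same route as the paper: you pass to a subsequence $x_{n_k}\wc w$, apply the Opial condition~\eqref{eq:Opial} to it, and use $w,z\in\G(\seq)$ to identify the subsequential $\liminf$'s with $\pS(w)$ and $\pS(z)$. Your explicit remarks on why the subsequence is needed and why reflexivity plays no role are accurate and simply spell out what the paper's short proof leaves implicit.
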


\begin{proof}
Consider a subsequence $\{x_{n_k}\}_k\subset\seq$ with $x_{n_k}\wc w\in\G(\seq)$ as $k\to\infty$; then, for an arbitrary point $z\in\G(\seq)$, $z\neq w$, involving the Opial property~\eqref{eq:Opial}, we see that
\[
\pS(w)=\lim_{k\to\infty}\norm{x_{n_k}-w}<\lim_{k\to\infty}\norm{x_{n_k}-z}=\pS(z),
\]
which shows~\eqref{eq:pmin}.
\end{proof}

We can now derive an important connection between the weak convergence of a sequence $\seq\subset\X$ and the set $\G(\seq)$ from~\eqref{eq:L}.

\begin{proposition}\label{prop:conv}
Consider a sequence $\seq$ in a reflexive Opial space $\X$ with $\G(\seq)\neq\emptyset$.
If all weak accumulation points of $\seq$ belong to $\G(\seq)$, then the (entire) sequence $\seq$ converges weakly.
\end{proposition}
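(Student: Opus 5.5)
The plan is a standard Opial-type argument. It uses Lemma~\ref{lem:my} to show that the weak accumulation point of $\seq$ is unique, and then shows that a bounded sequence in a reflexive space with a unique weak accumulation point converges weakly.

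\emph{Step 1 (existence of an accumulation point).} Since $\G(\seq)\neq\emptyset$, Remark~\ref{rem:subseq}(a) shows that $\seq$ is bounded. Reflexivity then yields a subsequence $x_{n_k}\wc w$ for some $w\in\X$. By hypothesis $w\in\G(\seq)$.

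\emph{Step 2 (uniqueness).} Suppose $w_1,w_2$ are two weak accumulation points of $\seq$. By hypothesis both lie in $\G(\seq)$. If $w_1\neq w_2$, Lemma~\ref{lem:my} applied to $w_1$ with $z=w_2$ gives $\pS(w_1)<\pS(w_2)$. Applied to $w_2$ with $z=w_1$, it gives $\pS(w_2)<\pS(w_1)$. This is a contradiction, so $w_1=w_2=:w$. The key point is that $\pS$ is defined via a full limit, so the value $\pS(w_i)$ does not depend on which subsequence realises the accumulation point. That is exactly why membership in $\G(\seq)$ is required.

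\emph{Step 3 (full weak convergence).} Suppose $x_n\not\wc w$. Then there exist a functional $\ell\in\X^*$, some $\varepsilon>0$, and a subsequence $\{x_{m_j}\}_j$ with $|\ell(x_{m_j})-\ell(w)|\ge\varepsilon$ for all $j$. This subsequence is bounded, so by reflexivity (Eberlein--\v{S}mulian) it has a further subsequence converging weakly to some $w'$. Then $w'$ is a weak accumulation point of $\seq$, so $w'=w$ by Step 2. But passing to the limit gives $|\ell(w')-\ell(w)|\ge\varepsilon$, a contradiction. Hence $x_n\wc w$.

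No step is a real obstacle. The only delicate points are to invoke boundedness from Remark~\ref{rem:subseq}(a) before using reflexivity in Step 3, and to note in Step 2 that the hypothesis on accumulation points is what makes Lemma~\ref{lem:my} applicable to both points symmetrically.
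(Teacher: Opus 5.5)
Your proposal is correct and follows essentially the same route as the paper. Boundedness comes from Remark~\ref{rem:subseq}(a), uniqueness of the weak accumulation point follows from applying Lemma~\ref{lem:my} symmetrically to any two such points in $\G(\seq)$, and your Step~3 spells out the standard subsequence-of-a-subsequence argument that the paper compresses into ``by weak sequential compactness.''
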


\begin{proof}
Recalling Remark~\ref{rem:subseq}~(a), since $\G(\seq)\neq\emptyset$, we note that the sequence $\seq$ is bounded. Hence, by reflexivity of $\X$, there is a weakly convergent subsequence $x_{n_k}\wc w$ as $k\to\infty$. Employing Lemma~\ref{lem:my}, we have $\pS(w)<\pS(z)$ for any $z\in\G(\seq)$, $z\neq w$. In particular, since by assumption every weak accumulation point of $\seq$ lies in $\G(\seq)$, we conclude that there is exactly one such point. Therefore, by weak sequential compactness, the whole sequence converges weakly to $w$, that is $x_n \wc w$ as $n \to \infty$.
\end{proof}

\begin{remark} 
The hypothesis in the above Proposition~\ref{prop:conv}, whereby all weak accumulation points of $\seq$ belong to $\G(\seq)\neq\emptyset$, can be replaced by assuming that $\seq$ is bounded and that there is a point $p\in\X$ with
\[
\limsup_{n\to\infty}\norm{x_n-p}\le\liminf_{n\to\infty}\norm{x_n-w},
\]
for all weak accumulation points~$w$ of $\seq$. Indeed, suppose that $w$ is a weak accumulation point of $\seq$, with a corresponding convergent subsequence $x_{n_k}\wc w$ as $k\to\infty$. Then, Opial's condition~\eqref{eq:Opial} implies that
\[
\liminf_{k\to\infty}\norm{x_{n_k}-w}
\le\liminf_{k\to\infty}\norm{x_{n_k}-p}
\le\limsup_{n\to\infty}\norm{x_n-p}
\le\liminf_{n\to\infty}\norm{x_n-w},
\]
and thus
\[
\liminf_{n\to\infty}\norm{x_n-w}=\liminf_{k\to\infty}\norm{x_{n_k}-w}=\lim_{n\to\infty}\norm{x_n-p},
\]
which yields~$p\in\G(\seq)$. Furthermore, again by~\eqref{eq:Opial}, the second of the above equalities can only hold for $w=p$, whence we deduce that there is exactly one weak accumulation point of $\seq$, and thus $x_n\wc p$ as $n\to\infty$.
\end{remark}

\begin{remark}
We note that a sequence $\seq$ may have two different weak accumulation points, say $w_1$ and $w_2$, such that $w_1\in\G(\seq)$ and $w_2\not\in\G(\seq)$. Indeed, consider again the canonical basis $\{e_n\}_{n\ge 1}\subset\ell_2(\mathbb{R})$ from Remark~\ref{rem:subseq}~(b), and define a sequence $\seq=\{x_n\}_{n\ge 1}$ by
\[
x_n=\begin{cases}
e_n&\text{if $n$ is even},\\
e_1&\text{if $n$ is odd}.
\end{cases}
\]
Then, $\norm{x_n}=1$ for each $n\ge 1$, and $e_{2n}\wc 0$ as $n\to\infty$. Hence, $w_1=0$ is a weak accumulation point that belongs to $\G(\seq)$. Furthermore, there is a second weak accumulation point $w_2=e_1$, for which
\[
\norm{w_2-x_n}=\begin{cases}
\sqrt2&\text{if $n\ge 2$ is even},\\
0&\text{if $n$ is odd}
\end{cases}
\]
does not converge. This example demonstrates that the validity of the above Proposition~\ref{prop:conv} requires more than the inclusion of a single weak accumulation point in~$\G(\seq)$ when multiple weak accumulation points exist.
\end{remark}


\subsection{Weak convergence to fixed points}

We establish the weak convergence of the fixed point iteration~\eqref{eq:fp}, for a suitable initial point $x_0\in\M$, by proving a weaker version of Opial's original theorem that is based on condition~\NE rather than on global non-expansiveness, cf.~\eqref{eq:ne'}. 
In the sequel, we denote by $\K$ the set of 
all weak accumulation points of the sequence $\seq(x_0)$ generated by~\eqref{eq:fp} in $\M$, and recall the notion~\eqref{eq:ar} of asymptotic regularity. A key assumption in the following analysis is that $\F{f}\subset\G(\seq(x_0))$, which is satisfied, for instance, under quasi-Fejér monotonicity; see Example~\ref{ex:sF} below.

\begin{theorem}\label{thm:main1}
Let $x_0\in\M$ be a starting point for the iteration~\eqref{eq:fp} in a subset $\M$ of type~\MM of a reflexive Opial space $\X$ such that the resulting sequence $\seq(x_0)$ belongs to $\M$ and is asymptotically regular, cf.~\eqref{eq:ar}. Suppose that $f$ satisfies condition~\NE on $\K$. If\, $\F{f}\subset\G(\seq(x_0))$ then the (entire) sequence $\seq(x_0)$ converges weakly to a point $w\in\F{f}$, which satisfies 
\begin{equation}\label{eq:psix}
\pS(w)=\min_{z\in\G(\seq(x_0))}\pS(z)=\min_{z\in\F{f}}\pS(z).
\end{equation}
\end{theorem}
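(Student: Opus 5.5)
The plan is to reduce the statement to Proposition~\ref{prop:conv}. Since $\F{f}\subset\G(\seq(x_0))$, it suffices to show that every weak accumulation point of $\seq(x_0)$ is a fixed point of $f$, i.e.\ $\K\subset\F{f}$. Two things must also be checked before the proposition applies. First, $\G(\seq(x_0))\neq\emptyset$: this holds once we know $\K\neq\emptyset$ (from \SS, since $\seq(x_0)\subset\M$) and $\K\subset\F{f}$. Second, we need the chain $\K\subset\F{f}\subset\G(\seq(x_0))$. Proposition~\ref{prop:conv} then gives $x_n\wc w$ for some $w$, and $w\in\K\subset\F{f}$.

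The key step is showing that each $w\in\K$ is a fixed point; this imitates the argument in the proof of Theorem~\ref{thm:ex}. Take a subsequence $x_{n_k}\wc w$, so $w\in\M$ by \SS. By asymptotic regularity~\eqref{eq:ar}, $\norm{x_{n_k+1}-x_{n_k}}\to0$, and therefore
\[
\liminf_{k\to\infty}\norm{f(w)-x_{n_k}}=\liminf_{k\to\infty}\norm{f(w)-x_{n_k+1}}=\liminf_{k\to\infty}\norm{f(w)-f(x_{n_k})}.
\]
Since $f$ satisfies \NE at $w\in\K$, and the sequence $\{x_{n_k}\}_k\subset\M$ converges weakly to $w$, the right-hand side is at most $\liminf_{k}\norm{w-x_{n_k}}$. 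The Opial condition~\eqref{eq:Opial} applied to $x_{n_k}\wc w$ then excludes $f(w)\neq w$, so $f(w)=w$.

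For the characterization~\eqref{eq:psix}, note that $w$ is a weak accumulation point lying in $\G(\seq(x_0))$. Lemma~\ref{lem:my} therefore makes $w$ the unique minimizer of $\pS$ over $\G(\seq(x_0))$. Because $w\in\F{f}\subset\G(\seq(x_0))$, the minimum over the smaller set $\F{f}$ is attained at the same point and has the same value. This yields both equalities in~\eqref{eq:psix}.

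I expect the main obstacle to be the fixed point step, and specifically the bookkeeping with liminf. The Opial inequality must be applied to the same subsequence on which \NE was invoked. We also need that the index shift via asymptotic regularity preserves the liminf, which holds since the differences tend to zero. Everything else is direct citation of Lemma~\ref{lem:my} and Proposition~\ref{prop:conv}.
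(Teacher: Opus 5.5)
Your proof is correct and follows essentially the paper's route: show each $w\in\K$ is a fixed point by combining the Opial condition, asymptotic regularity and \NE, deduce $\K\subset\F{f}\subset\G(\seq(x_0))$, and conclude with Proposition~\ref{prop:conv} and Lemma~\ref{lem:my}. Your choice to run the Opial argument along a subsequence $x_{n_k}\wc w$ for an arbitrary $w\in\K$ is the careful form of the paper's written proof, which applies Opial along the full sequence before full weak convergence is known; your version matches the subsequence argument of Remark~\ref{rem:arnew}.
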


\begin{proof}
Using that $\X$ is reflexive, and that the set $\M$ satisfies property~\MM, we note that $\K\neq\emptyset$, cf.~\SS. Our goal is to prove that 
\begin{equation}\label{eq:prove}
\K\subset\G(\seq(x_0)).
\end{equation} 
Then, from Proposition~\ref{prop:conv}, we immediately deduce the weak convergence of the (full) sequence $x_n\wc w$ as $n\to\infty$ to a weak limit $w\in\G(\seq(x_0))\cap\M$, and $\K=\{w\}$. Moreover, by virtue of Lemma~\ref{lem:my}, we know that~$w$ is the unique minimizer of $\pS$ on $\G(\seq(x_0))$, which yields the first equality in~\eqref{eq:psix}. 

To establish the inclusion~\eqref{eq:prove}, suppose that $f(w)\neq w$. Then, employing the Opial property~\eqref{eq:Opial}, and using asymptotic regularity, we observe that
\begin{align*}
\liminf_{n\to\infty}\norm{x_{n}-w}
&<
\liminf_{n\to\infty}\norm{x_{n}-f(w)}\\
&=\liminf_{n\to\infty}\norm{x_{n+1}-f(w)}
=\liminf_{n\to\infty}\norm{f(x_n)-f(w)},
\end{align*}
which contradicts the bound~\eqref{eq:ne}, and thus, we obtain $f(w)=w$. Hence, we conclude that $\K=\{w\}\subset\F{f}\subset\G(\seq(x_0))$, which shows~\eqref{eq:prove} as well as the second equality in~\eqref{eq:psix}.
\end{proof}
 
 \begin{remark}\label{rem:arnew}
The proof of the above Theorem~\ref{thm:main1} is essentially based on~\eqref{eq:prove}. Incidentally, this inclusion could be replaced by $f(\K)\subset\G(\seq(x_0))$. Indeed, consider \emph{any} weak accumulation point $w\in\K$, and a corresponding subsequence $x_{n_k}\wc w$ as $k\to\infty$. Suppose that $f(w)\neq w$, then by the Opial condition~\eqref{eq:Opial} it holds that
\begin{equation*}
\liminf_{k\to\infty}\norm{w-x_{n_k}}
<\liminf_{k\to\infty}\norm{f(w)-x_{n_k}}.
\end{equation*}
Since $f(w)\in\G(\seq(x_0))$, we infer that
\begin{equation*}
\liminf_{k\to\infty}\norm{w-x_{n_k}}
<\lim_{k\to\infty}\norm{f(w)-x_{n_k+1}}
=\lim_{k\to\infty}\norm{f(w)-f(x_{n_k})}
,
\end{equation*}
which causes a contradiction to the weak sequential non-expansiveness~\NE. Hence, we see that 
$f(w)=w$, and thus $f(\K)=\K$; from this, in turn, we recover~\eqref{eq:prove}.
\end{remark}

\begin{example}\label{ex:sF}
The assumption $\F{f}\subset\G(\seq(x_0))$ in the above Theorem~\ref{thm:main1} may be ensured, for instance, if the sequence $\seq(x_0)$ from~\eqref{eq:seq} is \emph{quasi-Fej\'er monotone}, i.e. for any $y\in\F{f}$, it holds that
\begin{equation}\label{eq:qF}
\norm{x_{n+1}-y}\le\norm{x_n-y}+\eta_n(y),
\end{equation}
for any $n\in\mathbb{N}$ large enough, where we require $\eta_n(y)\ge 0$ for all $n$, and $\sum_{k=1}^\infty\eta_k(y)<\infty$. Then, the sequence given by $a_n:=\norm{x_n-y}+\sum_{k=n}^\infty\eta_k(y)$, $n\ge 0$, is monotone decreasing, and thus convergent. This, in turn, implies that $y\in\G(\seq(x_0))$, and therefore, we deduce that $\F{f}\subset\G(\seq(x_0))$.
\end{example}

\section{Application to modified successive iterations}

For a mapping $f:\M\to\X$ on a convex subset $\M\subset\X$, and a parameter $\eps\in(0,1)$, we consider the modified successive iteration
\begin{subequations}\label{eq:smi}
\begin{equation}\label{eq:smi1}
x_{n+1}=g_\eps(x_n),\qquad n\ge 0,
\end{equation}
where we define the convex combination
\begin{equation}\label{eq:smi2}
g_\eps(x):=\eps x+(1-\eps)f(x),\qquad x\in\M,
\end{equation}
\end{subequations}
cf.~\eqref{eq:mod}; by the convexity of $\M$, we note that $g_\eps(x_n)\in\M$ as long as $f(x_n)\in\M$ (which is clearly true for any $n\ge0$, for instance, if $f(\M)\subset\M$). In line with our notation~\eqref{eq:seq}, for a starting point $x_0\in\M$, we write
\begin{equation}\label{eq:seqt}
\seqt(x_0)=\{x_n\}_{n\ge 0}=\left\{g_\eps^n(x_0)\right\}_{n\ge 0}
\end{equation}
to signify the sequence generated by~\eqref{eq:smi1}. In the following result, the asymptotic regularity required in  Theorem~\ref{thm:main2} can be replaced by condition~\WC.

\begin{theorem}\label{thm:main2}
On a subset $\M$ of an Opial space $\X$, which satisfies property {\rm(M)}, consider the modified successive iteration~\eqref{eq:smi}. Suppose that the underlying mapping $f:\,\M\to\X$ in~\eqref{eq:smi2} satisfies \WC for some $\delta\in(0,1)$. For any $\eps\in(0,\delta]$, if $g_\eps$ from~\eqref{eq:smi2} is weakly sequentially non-expansive on $\M$, cf.~\NE, and the sequence $\seqt(x_0)$ from~\eqref{eq:seqt} belongs to $\M$ and fulfills $\F{f}\subset\G(\seqt(x_0))$, then the (entire) sequence $\seqt(x_0)$ converges weakly to a fixed point of $f$. 
\end{theorem}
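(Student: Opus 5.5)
The plan is to reduce the statement to Theorem~\ref{thm:main1}, applied to the mapping $g_\eps$ in place of $f$. First, observe that $\F{g_\eps}=\F{f}$: indeed, $g_\eps(x)=x$ is equivalent to $(1-\eps)(f(x)-x)=0$, and $\eps<1$. Hence the hypothesis $\F{f}\subset\G(\seqt(x_0))$ reads $\F{g_\eps}\subset\G(\seqt(x_0))$. Moreover, $g_\eps$ satisfies \NE on $\M$, and in particular on the set of weak accumulation points of $\seqt(x_0)$. Once asymptotic regularity of $\seqt(x_0)$ is established, Theorem~\ref{thm:main1} (with $f$ replaced by $g_\eps$, and $\seq(x_0)$ by $\seqt(x_0)$) yields weak convergence of the whole sequence to some $w\in\F{g_\eps}=\F{f}$.

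The main step is therefore asymptotic regularity, which I will derive from \WC, mimicking the argument in the proof of Theorem~\ref{thm:ex}. For $x_n\in\M$ we have $x_{n+1}-f(x_n)=\eps(x_n-f(x_n))$. Since $x_{n+1}-x_n=(1-\eps)(f(x_n)-x_n)$, this equals $-\tfrac{\eps}{1-\eps}(x_{n+1}-x_n)$.

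I need the bound $\limsup_n\norm{x_{n+1}-f(x_n)}\le\delta d_\M$, so that \eqref{eq:auxb} fails. Two routes are available:
\begin{itemize}
\item Writing $\norm{x_{n+1}-f(x_n)}=\tfrac{\eps}{1-\eps}\norm{x_{n+1}-x_n}\le\tfrac{\eps}{1-\eps}d_\M$ works only if $\eps\le\delta(1-\eps)$, which is slightly stronger than $\eps\le\delta$.
\item Using $\norm{x_{n+1}-f(x_n)}=\eps\norm{x_n-f(x_n)}$ works if $f(x_n)\in\M$. Then the bound is $\le\eps d_\M\le\delta d_\M$.
\end{itemize}
I would use the second route, noting (as in the paragraph preceding the theorem) that $x_{n+1}\in\M$ and convexity are natural here. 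This step is the main obstacle: if $f(x_n)\notin\M$, one must bound $\norm{x_n-f(x_n)}$ otherwise, e.g.\ via $\tfrac{1}{1-\eps}\norm{x_{n+1}-x_n}\le \tfrac{d_\M}{1-\eps}$, possibly combined with a strictness argument.

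Granted this, \WC forces \eqref{eq:wc} to fail. Now compute
\[
x_{n+2}-x_{n+1}=\eps(x_{n+1}-x_n)+(1-\eps)\bigl(f(x_{n+1})-f(x_n)\bigr).
\]
Set $\li=\limsup\norm{x_{n+1}-x_n}$ and $\mu=\limsup\norm{f(x_{n+1})-f(x_n)}$. This gives $\li\le\eps\li+(1-\eps)\mu$, i.e.\ $\li\le\mu$. If $\li>0$, failure of \eqref{eq:wc} requires $\mu\le\li$, hence $\mu=\li$, and the equality case is not yet excluded.

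To close this gap I would exploit \WC more sharply. The plan is to pass to subsequences along which $\norm{x_{n+1}-x_n}$ is near $\li$ and to use the averaging structure, as in the classical Ishikawa/Goebel–Kirk argument. The idea is to show that $\li>0$ would force strict growth $\norm{f(x_{n+1})-f(x_n)}>\norm{x_{n+1}-x_n}+\theta$ infinitely often, which contradicts \WC.

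If this direct route stalls, the fallback is the observation from the proof of Theorem~\ref{thm:ex}. There, the contraction factor combined with \WC gave $\li=0$ outright. So I would try to compare the sequence with the iteration of $(1-\eps)f+\eps z$. With asymptotic regularity in hand, the conclusion follows from Theorem~\ref{thm:main1} as described in the first paragraph.
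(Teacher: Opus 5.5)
You have the overall structure right, and it matches the paper: the reduction to Theorem~\ref{thm:main1} via $\F{g_\eps}=\F{f}$ and \NE for $g_\eps$, the bound $\norm{x_{n+1}-f(x_n)}=\eps\norm{x_n-f(x_n)}\le\eps d_\M\le\delta d_\M$, and the use of \WC to exclude $0<\li<\mu$, where $\mu:=\limsup_n\norm{f(x_{n+1})-f(x_n)}$. Your second route is exactly what the paper does; it tacitly uses $\norm{f(x_n)-x_n}\le d_\M$, just as you do.

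The genuine gap is the case you leave open, $\li>0$ with $\mu\le\li$. This is the core of the proof, and your proposal gives no mechanism for it. Your target ($\li>0$ forces $\mu>\li$, contradicting \WC) is logically what is needed, but \WC cannot help produce it, because \WC is silent when $\mu\le\li$. What rules this case out is the boundedness of $\M$, via an Ishikawa-type lemma (Lemma~\ref{lem:Ish}): if $\mu\le\li$, then $\seqt(x_0)$ is asymptotically regular.

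Your fallback also fails. In the proof of Theorem~\ref{thm:ex}, strictness came from the pure scaling $x_{k+2}-x_{k+1}=(1-\epsilon_n)(f(x_{k+1})-f(x_k))$. Here $x_{n+2}-x_{n+1}=\eps(x_{n+1}-x_n)+(1-\eps)(f(x_{n+1})-f(x_n))$ only gives $\li\le\mu$. Moreover, iterating an auxiliary map $(1-\eps)f+\eps z$ says nothing about the given sequence $\seqt(x_0)$.

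The missing argument goes as follows. Put $y_n:=f(x_n)-x_n$, so that $x_{n+1}-x_n=(1-\eps)y_n$ and $y_{n+1}-\eps y_n=f(x_{n+1})-f(x_n)$. With $r:=\limsup_n\norm{y_n}=\li/(1-\eps)>0$, this gives $\limsup_n\norm{y_{n+1}-\eps y_n}\le\mu\le(1-\eps)r$. Abel summation yields, for $\ell\ge2$,
\[
x_{n+\ell+1}-x_{n+1}=(1-\eps)\sum_{k=1}^{\ell}y_{n+k}=\eps\bigl(\eps^{-\ell}-1\bigr)y_{n+\ell}-\sum_{k=1}^{\ell-1}\bigl(\eps^{-k}-1\bigr)\bigl(y_{n+k+1}-\eps y_{n+k}\bigr).
\]
Choose $n$ large with $\norm{y_{n+\ell}}\ge(1-\epsilon)r$ and $\norm{y_{m+1}-\eps y_m}\le(1+\epsilon)(1-\eps)r$ for all $m\ge n$. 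Then $\norm{x_{n+\ell+1}-x_{n+1}}\ge r(1-\eps)\ell-2r\epsilon\eps^{1-\ell}$. Fix $\ell\ge2$ with $r(1-\eps)\ell\ge d_\M+1$, and then $\epsilon>0$ with $2r\epsilon\eps^{1-\ell}<1$. The right-hand side now exceeds $d_\M$, which is impossible because all iterates lie in $\M$. The iterates would drift linearly, and a bounded set forbids this. Hence $\li=0$ in this case as well. Combined with your \WC step for $\mu>\li$, this gives asymptotic regularity, and Theorem~\ref{thm:main1} then finishes the proof.
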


The proof of Theorem~\ref{thm:main2} requires an auxiliary result on asymptotic regularity inspired by ~\cite[Lem.~2]{Ishikawa:1976}, which was originally established for non-expansive mappings~\eqref{eq:ne'}.

\begin{lemma}\label{lem:Ish}
For any $\eps\in(0,1)$ and a starting point $x_0\in\M$, suppose that the sequence $\seqt(x_0)$ from~\eqref{eq:seqt} satisfies
\begin{equation}\label{eq:limsup}
\limsup_{n\to\infty}\norm{f(x_{n+1})-f(x_{n})}\le\limsup_{n\to\infty}\norm{x_{n+1}-x_n}.
\end{equation}
Then, the sequence $\seqt(x_0)$ is asymptotically regular, cf.~\eqref{eq:ar}.
\end{lemma}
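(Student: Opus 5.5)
The plan is to argue by contradiction, adapting the Goebel--Kirk/Ishikawa inequality so that it uses only the asymptotic hypothesis \eqref{eq:limsup} and not a pointwise non-expansiveness bound.

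\textbf{Setup.} Write $y_n:=f(x_n)$ and $d_n:=\norm{y_n-x_n}$. By \eqref{eq:smi}, $x_{n+1}-x_n=(1-\eps)(y_n-x_n)$, so $\norm{x_{n+1}-x_n}=(1-\eps)d_n$. The sequence $\{x_n\}_n\subset\M$ is bounded, since $\M$ is of type~\MM. The sequence $\{y_n\}_n$ is bounded as well, because $y_n=(x_{n+1}-\eps x_n)/(1-\eps)$. Set $D:=\limsup_n d_n$, so that $\li:=\limsup_n\norm{x_{n+1}-x_n}=(1-\eps)D$. It suffices to show $D=0$, so suppose $D>0$.

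\textbf{Step 1: backward propagation of near-maximality.} Fix $\epsilon>0$. By \eqref{eq:limsup} there is $N$ such that, for all $k\ge N$,
\[
\norm{y_{k+1}-y_k}\le\li+\epsilon=(1-\eps)D+\epsilon
\qquad\text{and}\qquad
d_k\le D+\epsilon .
\]
Since $y_k-x_{k+1}=\eps(y_k-x_k)$, the triangle inequality gives
\[
d_{k+1}\le\norm{y_{k+1}-y_k}+\norm{y_k-x_{k+1}}\le(1-\eps)D+\epsilon+\eps d_k .
\]
Hence $d_{k+1}\ge D-\epsilon$ forces $d_k\ge D-2\epsilon/\eps$. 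Iterating this backwards $m$ times, I obtain the following. Whenever $j\ge N+m$ is an index with $d_j\ge D-\epsilon$ (such indices exist by the definition of the limsup), all of $d_{j-m},\dots,d_j$ lie in $[D-c_m\epsilon,\,D+\epsilon]$, with a constant $c_m$ depending only on $m$ and $\eps$.

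\textbf{Step 2: approximate Goebel--Kirk inequality.} Set $i:=j-m$. By induction on $m$, I will prove a lower bound of the form
\[
\norm{y_{i+m}-x_i}\ \ge\ (1-\eps)^{-m}\bigl(d_{i+m}-d_i\bigr)+(1+m\eps)\,d_i-C_m\epsilon ,
\]
with $C_m$ depending only on $m$ and $\eps$. The induction step is Ishikawa's original computation. In place of $\norm{y_{k+1}-y_k}\le\norm{x_{k+1}-x_k}$, one uses the approximate bound $\norm{y_{k+1}-y_k}\le(1-\eps)d_k+(1-\eps)\bigl(D-d_k\bigr)+\epsilon$. Its excess over $(1-\eps)d_k$ is $O(c_m\epsilon)$ along the stretch, by Step~1. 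These errors accumulate, each multiplied by powers of $(1-\eps)^{-1}$, into $C_m\epsilon$.

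\textbf{Step 3: conclusion.} On the chosen stretch, $d_{i+m}-d_i\ge-(c_m+1)\epsilon$ and $d_i\ge D-c_m\epsilon$. Inserting this into the bound of Step~2 gives
\[
\norm{y_{i+m}-x_i}\ \ge\ (1+m\eps)D-C'_m\epsilon .
\]
The left-hand side is bounded by $R:=\sup_n\norm{x_n}+\sup_n\norm{y_n}<\infty$. Choose first $m$ so large that $(1+m\eps)D>2R$, and then $\epsilon$ so small that $C'_m\epsilon<R$. This is a contradiction, so $D=0$ and hence $\norm{x_{n+1}-x_n}\to0$, which is \eqref{eq:ar}.

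\textbf{Main obstacle.} The delicate point is Step~2: redoing Ishikawa's induction with error terms while keeping the constants independent of $\epsilon$, so that the order of choices ($m$ first, then $\epsilon$) is legitimate. Step~1 is precisely what makes this possible. It guarantees that the whole window $d_{i},\dots,d_{i+m}$ is uniformly close to $D$, so the $(1-\eps)^{-m}$ amplification of $d_{i+m}-d_i$ only ever multiplies an $O(\epsilon)$ quantity.
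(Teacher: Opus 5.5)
Your argument works after one correction (below), but it takes a different route from the paper's. The paper never uses the Goebel--Kirk inequality. It uses Ishikawa's closed-form identity
\[
x_{n+\ell+1}-x_{n+1}=\tau\bigl(\tau^{-\ell}-1\bigr)\bigl(f(x_{n+\ell})-x_{n+\ell}\bigr)-\sum_{k=1}^{\ell-1}\bigl(\tau^{-k}-1\bigr)\bigl(f(x_{n+k+1})-f(x_{n+k})\bigr),
\]
together with $\norm{x_{n+\ell+1}-x_{n+1}}\le d_\M$. This needs a lower bound on $\norm{f(x_n)-x_n}$ only at the single index $n+\ell$, which the limsup supplies directly. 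It also needs the upper bound \eqref{eq:limsup} on consecutive differences of $f(x_n)$ at all large indices. No monotonicity or near-constancy of $d_n$ is ever needed, so there is no counterpart of your Step~1 and no induction, and $\ell$ and $\epsilon$ are explicit. Your route rebuilds the Goebel--Kirk inequality, whose induction uses $d_{k+1}\le d_k$ at every step. Since that is unavailable under the purely asymptotic hypothesis, you need Step~1 to create a window on which $d_n$ is nearly constant. Step~1 is a nice observation in its own right: a near-maximal $d_j$ forces $d_{j-1},\dots,d_{j-m}$ to be near-maximal too. You then carry errors whose constants grow like powers of $\tau^{-1}$. Your route is longer and less quantitative, but modular, and it yields an approximate form of a standard inequality.

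The correction concerns Step~2, whose constants belong to the scheme $x_{n+1}=(1-\tau)x_n+\tau f(x_n)$. For \eqref{eq:smi2}, where $f$ carries the weight $1-\tau$, the induction gives
\[
\norm{f(x_{i+m})-x_i}\ge\bigl(1+m(1-\tau)\bigr)d_i-\tau^{-m}\bigl(d_i-d_{i+m}\bigr)-C_m\epsilon .
\]
Your displayed version cannot come out of the induction. For the non-expansive translation $f(x)=x+c$ on $\mathbb{R}$, one has $d_n\equiv|c|$ and $\norm{f(x_{i+m})-x_i}=\bigl(1+m(1-\tau)\bigr)|c|$, which is less than $(1+m\tau)|c|$ whenever $\tau>1/2$. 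Step~3 is unaffected, since it only needs the gain factor to be unbounded in $m$ and the amplification factor to be finite for fixed $m$. When you write the induction out, also do the following:
\begin{itemize}
\item Bound the non-consecutive difference by telescoping, $\norm{f(x_{i+n+1})-f(x_i)}\le(n+1)\bigl((1-\tau)D+\epsilon\bigr)$, not by non-expansiveness, which you do not have.
\item Replace $d_k-d_{k+1}\ge0$ by the window bound $d_k-d_{k+1}\ge-(c_m+1)\epsilon$ from Step~1.
\end{itemize}
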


\begin{proof}
Suppose that the given sequence is \emph{not} asymptotically regular, i.e.
\begin{align*}
\lambda:=\limsup_{n\to\infty}\norm{x_{n+1}-x_{n}}>0.
\end{align*}
Then, we have that
\begin{equation}\label{eq:la}
\lambda
=\limsup_{n\to\infty}\norm{g_\eps(x_{n})-x_{n}}
=(1-\eps)\limsup_{n\to\infty}\norm{f(x_n)-x_n}.
\end{equation}
Furthermore, for each $n\ge 0$, letting $y_n:=f(x_n)-x_n$, 
we use~\eqref{eq:smi} to obtain
\begin{align*}
y_{n+1}-\eps y_n
&=
f(x_{n+1})-x_{n+1}-\eps(f(x_n)-x_n)
=f(x_{n+1})-f(x_{n}).
\end{align*}
Then, exploiting~\eqref{eq:limsup}, we arrive at
\begin{subequations}\label{eq:y}
\begin{align}\label{eq:y1}
\limsup_{n\to\infty}\norm{y_{n+1}-\eps y_n}
\le\lambda=r(1-\eps),
\end{align}
where we let
\begin{align}\label{eq:y2}
r:=\limsup_{n\to\infty}\norm{y_n}=\nicefrac{\lambda}{(1-\eps)}>0,
\end{align}
\end{subequations}
cf.~\eqref{eq:la}. Following the proof of~\cite[Lem.~2]{Ishikawa:1976}, for any $n\ge 1$ and $\ell\ge 2$, we notice that
\begin{align*}
x_{n+\ell+1}-x_{n+1}
&=\sum_{k=1}^\ell(x_{n+k+1}-x_{n+k})
=(1-\eps)\sum_{k=1}^\ell y_{n+k},
\end{align*}
which can be expanded further to the identity
\begin{align*}
x_{n+\ell+1}-x_{n+1}
&=\eps\left(\eps^{-\ell}-1\right)y_{n+\ell}-\sum_{k=1}^{\ell-1}\left(\eps^{-k}-1\right)(y_{n+k+1}-\eps y_{n+k}).
\end{align*}
Thus,
\begin{align}\label{eq:Ish}
d_{\M}
\ge
\eps\left(\eps^{-\ell}-1\right)\norm{y_{n+\ell}}-\sum_{k=1}^{\ell-1}\left(\eps^{-k}-1\right)\norm{y_{n+k+1}-\eps y_{n+k}},
\end{align}
with~$d_\M$ from~\eqref{eq:dM}. Now choose a (finite) integer $\ell\ge 2$ such that 
\begin{equation}\label{eq:dM1}
r(1-\tau)(\ell-1)\le d_\M+1 \le r(1-\tau)\ell.
\end{equation}
From~\eqref{eq:y}, for any $\epsilon>0$ with
\begin{equation}\label{eq:epso}
\epsilon\exp\left(r^{-1}\tau^{-1}(d_\M+1)\right)<\nicefrac{1}{2r},
\end{equation}
we can find $n\in\mathbb{N}$ sufficiently large such that $\norm{y_{n+\ell}}\ge (1-\epsilon)r$, and
\begin{align*}
\norm{y_{n+k+1}-\eps y_{n+k}}\le (1+\epsilon)(1-\eps)r
\qquad\forall k\ge n.
\end{align*}
Then, from~\eqref{eq:Ish}, we infer that
\begin{align*}
d_{\M}&\ge
\eps\left(\eps^{-\ell}-1\right)(1-\epsilon)r-(1+\epsilon)(1-\eps)r\sum_{k=1}^{\ell-1}\left(\eps^{-k}-1\right)\\
&=r(1-\eps)\ell-r\epsilon\left(2\eps\left(\eps^{-\ell}-1\right)-(1-\tau)\ell\right)
\ge d_\M+1-2r\epsilon\eps^{1-\ell}.
\end{align*}
For $s>0$, notice that $\ln(s)\le s-1$, which leads to
\[
\tau^{1-\ell}=\exp\left((\ell-1)\ln\left(\tau^{-1}\right)\right)
\le\exp\left((\ell-1)\tau^{-1}\left(1-\tau\right)\right).
\]
Incorporating~\eqref{eq:dM1}, it follows that
\[
\tau^{1-\ell}
\le\exp\left(r^{-1}\tau^{-1}(d_\M+1)\right).
\]
Finally, we conclude that
\[
d_\M\ge d_\M+1- 2r\epsilon\exp\left(r^{-1}\tau^{-1}(d_\M+1)\right),
\]
which results in a contradiction for our choice of $\epsilon$ in~\eqref{eq:epso}.
\end{proof}

\begin{proof}[Proof of Theorem~\ref{thm:main2}]
For any $\eps\in(0,1)$, observe that $\F{g_\eps}=\F{f}$, and thus we have $\F{g_\eps}\subset\G(\seqt(x_0))$. Hence, in view of Theorem~\ref{thm:main1}, it suffices to verify that the sequence $\seqt(x_0)$ is asymptotically regular. 
To this end, from~\eqref{eq:smi}, we notice that
\[
\norm{x_{n+1}-f(x_n)}
=\tau\norm{f(x_n)-x_n}\le\tau d_\M
\le\delta d_\M\qquad\forall n\ge 0,
\]
which yields the converse of the bound~\eqref{eq:auxb}. Thus, by property~\WC, if
\[
\limsup_{n\to\infty}\norm{f(x_{n+1})-f(x_{n})}
>
\limsup_{n\to\infty}\norm{x_{n+1}-x_n},
\]
then the first bound in~\eqref{eq:wc} is necessarily violated, and we deduce that $\seqt(x_0)$ is asymptotically regular. Otherwise, if 
\[
\limsup_{n\to\infty}\norm{f(x_{n+1})-f(x_{n})}
\le
\limsup_{n\to\infty}\norm{x_{n+1}-x_n},
\]
then asymptotic regularity is ensured by Lemma~\ref{lem:Ish}. 
This completes the argument.
\end{proof}

\begin{example}
Any non-expansive mapping~$f$, cf.~\eqref{eq:ne'}, satisfies the hypotheses of the above theorem (cf.~Opial's theorem~\cite[Thm.~3]{Opial:1967}); in particular, the conditions \NE and \WC immediately follow from~\eqref{eq:ne'}. In addition, for any parameter value $\eps\in(0,1)$, and for each fixed point $y\in\F{f}=\F{g_\eps}$, we note that the sequence $\seqt(x_0)$ from~\eqref{eq:seqt} is Fej\'er monotone (i.e. \eqref{eq:qF} with $\eta_n=0$ for all $n\ge 0$), and therefore $\F{f}\subset\G(\seqt(x_0))$, cf.~Example~\ref{ex:sF}.
\end{example}

\begin{remark}
Finally, we note that, in both Theorems~\ref{thm:main1} and~\ref{thm:main2}, if the image $g_\eps(\seqt(x_0))$ has a compact closure (for $\eps=0$ and $\eps\in(0,\delta]$, respectively), then the convergence of the sequence generated by~\eqref{eq:fp} and~\eqref{eq:smi1}, respectively, is strong. Clearly, this is the case, for instance, when $g_\eps$ is a compact mapping on~$\M$.
\end{remark}


\bibliographystyle{amsalpha}
\bibliography{refs}

\end{document}